\definecolor{darkblue}{rgb}{0.03,0.03,0.265}
\newtheorem{thm}{Theorem}
\newtheorem{lem}{Lemma}[section]
\newtheorem{prop}[lem]{Proposition}
\theoremstyle{definition}
\newtheorem*{rem*}{Remark}
\newcommand{\pp}{\mathbb{P}}
\newcommand{\ee}{\mathbb{E}}
\newcommand{\rr}{\mathbb{R}}
\newcommand{\nn}{\mathbb{N}}
\newcommand{\zz}{\mathbb{Z}}
\newcommand{\cz}{{\CMcal Z}}
\newcommand{\ch}{{\CMcal H}}
\newcommand{\cb}{{\CMcal B}}
\newcommand{\scw}{{\mathscr{W}}}
\newcommand{\p}{\partial}
\newcommand{\uno}[1]{\mathbf{1}_{#1}}
\newcommand{\heta}{\widehat{\eta}}
\newcommand{\etaep}{\eta^\ep}
\newcommand{\etaeq}{\eta^{\mathrm{eq}}}
\newcommand{\cheq}{\ch^{\mathrm{eq}}}
\newcommand{\ep}{\varepsilon}
\newcommand{\vs}{\vspace{6pt}}
\newcommand{\epp}[1][]{{\varepsilon{}^{#1}}}
\DeclareMathOperator{\TV}{TV}
\numberwithin{equation}{section}
\let\oldmarginpar\marginpar
\renewcommand\marginpar[1]{\-\oldmarginpar[\raggedleft\footnotesize #1]%
{\raggedright{\small\textsf{#1}}}}
\begin{document}

\title[Local Brownian property  of KPZ]{Local Brownian property of the narrow wedge
  solution of the KPZ equation}
\author{Jeremy Quastel}
\address[J.~Quastel]{
  Department of Mathematics\\
   University of Toronto\\
   40 St. George Street\\
   Toronto, Ontario\\
   Canada M5S 2E4}
 \email{quastel@math.toronto.edu}
\author{Daniel Remenik}
\address[D.~Remenik]{
  Department of Mathematics\\
  University of Toronto\\
  40 St. George Street\\
  Toronto, Ontario\\
  Canada M5S 2E4
  \newline
  \indent\textup{and}\indent
  Departamento de Ingenier\'ia Matem\'atica\\
  Universidad de Chile\\
  Av. Blanco Encala\-da 2120\\
  Santiago\\
  Chile}
  \email{dremenik@math.toronto.edu}

\begin{abstract}
  Let $\ch(t,x)$ be the Hopf-Cole solution at time $t$ of the Kardar-Parisi-Zhang (KPZ) equation
  starting with narrow wedge initial condition, i.e. the logarithm of the solution of the
  multiplicative stochastic heat equation starting from a Dirac delta. Also let
  $\cheq(t,x)$ be the solution at time $t$ of the KPZ equation with the same noise, but with initial condition
  given by a standard two-sided Brownian motion, so that $\cheq(t,x)-\cheq(0,x)$ is itself
  distributed as a standard two-sided Brownian motion. We provide a simple proof of the
  following fact: for fixed $t$, $\ch(t,x)-\big(\cheq(t,x)-\cheq(t,0)\big)$ is locally of  finite
  variation.  Using the same ideas we also show that if the KPZ equation is
  started with a two-sided Brownian motion plus a Lipschitz function then the solution
  stays in this class for all time.
\end{abstract}

\maketitle

\section{Introduction and statement of the results}

The \emph{KPZ equation}
\begin{equation}
  \label{eq:kpz}
  \p_t\ch=-\frac{1}{2}\big(\p_x\ch\big)^2+\frac{1}{2}\p_x^2\ch+\dot \scw,
\end{equation}
was introduced by \citet{KPZ} as a model of randomly growing interfaces. Here
$\dot\scw(t,x)$ is Gaussian space-time white noise,
$\ee\big(\dot\scw(t,x)\scw(s,y)\big)=\delta_{s=t}\delta_{x=y}$ (see Section 1.4 of
\cite{acq} for a precise definition). It is expected that the one-dimensional KPZ equation
appears as the weak asymptotic limit of a large class of stochastic interacting particle
systems/growth models, including directed random polymers, stochastic
Hamilton-Jacobi-Bellman equations, stochastically perturbed reaction-diffusion equations,
stochastic Burgers equations and interacting particle models, and it is in fact
rigourously known to describe the fluctuations in weakly asymmetric exclusion processes
\cite{berGiaco,acq,corQuas} and the partition function in directed polymer models
\cite{acq,akq,mqr2}. All these models belong to the so-called \emph{KPZ universality
  class}, which is associated with unusual fluctuations of order $t^{1/3}$ at time $t$ on
a spatial scale of $t^{2/3}$. We refer the reader to the reviews \cite{corwin,quastel} for
more details and background on the KPZ equation and universality class.

As stated the KPZ equation \eqref{eq:kpz} is ill-posed due to the non-linear term.  To
make sense of it we follow the approach of \citet{berGiaco}. Observe that if we let
$\cz(t,x)=\exp(-\ch(t,x))$ then, formally, $\cz(t,x)$ solves the (linear) stochastic heat
equation with multiplicative noise
\begin{equation}
  \label{eq:heat}
  \p_t\cz=\frac{1}{2}\p^2_x\cz-\cz\dot \scw.
\end{equation}
Therefore we simply define the solutions $\ch(t,x)$ of \eqref{eq:kpz} via the Hopf-Cole
transformation
\begin{equation}
  \label{eq:hopfCole}
  \ch(t,x)=-\log(\cz(t,x)),
\end{equation}
where $\cz(t,x)$ is the (well-defined) solution of the stochastic PDE \eqref{eq:heat}. In a remarkable recent development, M. Hairer \cite{hairer} has
proposed a  way to make sense of the KPZ equation directly. The resulting solutions  coincide with the Hopf-Cole solutions.

One of the most interesting properties of the KPZ equation is the preservation of Brownian
initial data.  In particular, if one starts the equation with a standard two-sided
Brownian motion, one sees at time $t$ a new Brownian motion with the
same diffusivity, but with a (random) height shift (the new Brownian motion will of course
be coupled to the starting one in a highly non-trivial way). Furthermore, any initial
data, however smooth, will immediately become locally Brownian. This can be understood in
many ways.  One is that one expects the local quadratic variation to be the same as that
of the equilibrium solutions, for any positive time, for arbitrarily nice initial data.
Another is that one expects that the solution at time $t$ can be written as a standard
two-sided Brownian motion $\cb(x)$ plus a more regular object.  One would naturally like
to take this Brownian motion $\cb(x)$ to be the solution of the equation starting from a
two-sided Brownian motion. In other words, one would like to couple all solutions to the
equilibrium one.  For a large class of initial data, Hairer \cite{hairer} has shown that
the solution can be written as a Brownian motion plus a function in ${\CMcal
  C}^{\frac{3}{2}-}$.  Unfortunately, the Brownian motion used is a solution of the
Langevin equation obtained by linearizing KPZ, as opposed to the equilibrium solution of
KPZ itself.  Moreover, not all initial data can be handled by the methods of \cite{hairer}
because they require that certain auxiliary objects be integrable against heat kernels in
space and time.  The singularity at time $0$ rules out one of the most important cases,
which is the narrow wedge initial data.

Our main interest will be this last case: the initial condition for KPZ given by starting
the stochastic heat equation \eqref{eq:heat} with initial data
\begin{equation}
\cz(0,x)=\delta_{x=0}.\label{eq:delta}
\end{equation}
This defines $\ch(t,x)$ for
every $t>0$ via \eqref{eq:hopfCole}, but one should not think in terms of the initial data
for $\ch$, since the delta function does not have a well-defined logarithm.  This
\emph{narrow wedge} initial data is very basic.  For
example, it is the one that approximates the free energy of point-to-point polymers.

For this initial data, if we define
${\CMcal A}_t(x)$ by
\[\ch(t,x)=\frac{x^2}{2t}+\log\!\big(\sqrt{2\pi t}\big)+\frac{t}{24}-2^{-1/3}t^{1/3}{\CMcal A}_t(2^{-1/3}t^{-2/3}x)\]
then, properly rescaled, ${\CMcal A}_t(x)$ converges to a Gaussian process as $t\to0$ and 
it is conjectured that it converges to
the Airy$_2$ process as $t\to\infty$ (see Conjecture 1.5 in \cite{acq}). ${\CMcal A}_t(x)$
is therefore referred to as the \emph{crossover Airy$_2$ process}, and interpolates
between the KPZ and Edwards-Wilkinson \cite{edWilk} universality classes, the last one
associated with the stochastic heat equation with additive noise and hence Gaussian
statistics.

We will denote by $\cheq(t,x)$ the solution of the KPZ equation \eqref{eq:kpz} started
with initial condition
\begin{equation}
\cheq(0,x)=\cb(x),\label{eq:eqic}
\end{equation}
where $\cb(x)$ is a two-sided standard Brownian motion. We recall that this initial
condition is such that, for each fixed $t\geq0$, $\cheq(t,x)-\cheq(t,0)$ is itself a
two-sided standard Brownian motion (in space), see Proposition B.2 in \cite{berGiaco}.  In
this equation we use the same white noise as in the earlier solution starting with Dirac
mass (\ref{eq:delta}).  The two solutions exist, are unique, and are coupled for all
time.


Our main result is the following:

\begin{thm}\label{thm:totvar}
  Fix $t>0$ and let $\ch(t,x)$ and $\cheq(t,x)$ be the Hopf-Cole solutions of the KPZ
  equation \eqref{eq:kpz} with respect to the same white noise and with initial conditions
  given by \eqref{eq:delta} and \eqref{eq:eqic}. Then
  $\ch(t,x)-\big(\cheq(t,x)-\cheq(t,0)\big)$ is a finite variation process.
\end{thm}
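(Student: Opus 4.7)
I would first use Hopf--Cole to convert the statement to one about a strictly positive continuous process. By \eqref{eq:hopfCole},
\[
\ch(t,x) - \bigl(\cheq(t,x) - \cheq(t,0)\bigr) \;=\; \log \frac{\cz^{\mathrm{eq}}(t,x)}{\cz(t,x)} \;+\; \cheq(t,0),
\]
and since $\cheq(t,0)$ is a constant in $x$, the theorem reduces to showing that the continuous positive process $V(x):=\cz^{\mathrm{eq}}(t,x)/\cz(t,x)$ is locally of finite variation in $x$; this transfers to $\log V$ because $V$ is continuous, strictly positive, and hence locally bounded and bounded away from zero.

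Next, using linearity of \eqref{eq:heat} applied to the initial condition $\cz^{\mathrm{eq}}(0,y)=e^{-\cb(y)}$, I would write
\[
\cz^{\mathrm{eq}}(t,x) \;=\; \int_{\rr} \cz_y(t,x)\,e^{-\cb(y)}\,dy,
\]
where $\cz_y(t,x)$ is the solution of \eqref{eq:heat} with initial condition $\delta_y$, coupled to $\cz=\cz_0$ through the same noise. Dividing by $\cz(t,x)$ gives
\[
V(x) \;=\; \int_{\rr} R_y(t,x)\,e^{-\cb(y)}\,dy, \qquad R_y(t,x):=\frac{\cz_y(t,x)}{\cz(t,x)}.
\]
If one can show that for a.e.\ $y$ the map $x \mapsto R_y(t,x)$ is locally of finite variation with total variation $\TV_{[a,b]}(R_y(t,\cdot))$ belonging to $L^1(\rr,e^{-\cb(y)}\,dy)$ a.s., then the Fubini bound $\TV_{[a,b]}(V) \leq \int \TV_{[a,b]}(R_y(t,\cdot))\,e^{-\cb(y)}\,dy$ completes the proof.

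\textbf{The main obstacle} is establishing this finite $x$-variation of $R_y$. Both $\cz$ and $\cz_y$ are only $C^{1/2-}$ H\"older in $x$, so the argument must exploit that they share the \emph{same} multiplicative noise $\dot\scw$. The encouraging fact is that formally applying the product rule to $R_y=\cz_y/\cz$ and using \eqref{eq:heat} for numerator and denominator yields the noiseless transport--diffusion equation
\[
\p_t R_y \;=\; \tfrac12 \p_x^2 R_y \;-\; \p_x\ch(t,x)\cdot \p_x R_y;
\]
the noise cancels exactly, at the price of a distributional drift coefficient $\p_x \ch$. Turning this into rigorous $x$-variation bounds requires working with the polymer / Feynman--Kac representation of $R_y(t,x)$ as a likelihood ratio between partition functions of Brownian bridges ending at $x$ and starting at $y$ vs.\ at $0$, and then using the smoothing of the heat kernel to produce an absolutely continuous derivative $\p_x R_y$. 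Integrability of $\TV_{[a,b]}(R_y)$ against $e^{-\cb(y)}\,dy$ should follow from standard SHE moment bounds paired with Gaussian tail estimates on $\cb$.
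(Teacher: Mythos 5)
Your opening reductions are fine: writing $\ch(t,x)-\bigl(\cheq(t,x)-\cheq(t,0)\bigr)=\log\bigl(\cz^{\mathrm{eq}}(t,x)/\cz(t,x)\bigr)+\cheq(t,0)$, noting that finite variation passes between $V$ and $\log V$ because $V$ is continuous, positive, hence bounded away from $0$ and $\infty$ on compacts, and using linearity of \eqref{eq:heat} to write $V(x)=\int_\rr R_y(t,x)e^{-\cb(y)}\,dy$ are all legitimate steps (the superposition formula needs the two-parameter fundamental solution of the stochastic heat equation, but that is available). The problem is that everything after ``\textbf{The main obstacle}'' is a plan, not a proof, and the deferred step \emph{is} the theorem. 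You need $x\mapsto R_y(t,x)=\cz_y(t,x)/\cz(t,x)$ to be of finite variation with $\TV_{[a,b]}(R_y)$ integrable against $e^{-\cb(y)}dy$, and no argument is given. The formal PDE you write for $R_y$ has drift $\p_x\ch(t,x)$, which is not a function ($\ch(t,\cdot)$ is only $C^{1/2-}$), so it cannot be ``turned rigorous'' without precisely the kind of cancellation estimate you are trying to establish; and the stronger hope that $\p_x R_y$ exists as an absolutely continuous derivative is unjustified and quite possibly false (in the zero-temperature analogue the corresponding ratio of partition functions is monotone in $x$ but its derivative is a singular measure). A known route that would close this gap is the log-supermodularity of the SHE propagator, $\cz_y(t,x)\cz_{y'}(t,x')\geq \cz_y(t,x')\cz_{y'}(t,x)$ for $y<y'$, $x<x'$, which makes $R_y(t,\cdot)$ monotone and gives $\TV_{[a,b]}(R_y)=|R_y(t,b)-R_y(t,a)|$, hence $\TV_{[a,b]}(V)\leq V(a)+V(b)<\infty$ by your Fubini bound; but you neither state nor prove this input, so as written the proposal does not establish the result.

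For contrast, the paper avoids all continuum regularity issues by passing to WASEP. There the total variation of $\tilde h_\ep(t,\cdot)-\tilde h^{\mathrm{eq}}_\ep(t,\cdot)$ on $[a,b]$ is \emph{exactly} $2\epp[1/2]$ times the number of discrepancies between the step-initial-condition process and the equilibrium process in the window, and attractiveness under the basic coupling sandwiches these discrepancies between two auxiliary processes $\eta^{\min}$, $\eta^{\max}$ whose height increments converge to finite random variables $\ch^{\max}(t,b)-\ch^{\max}(t,a)-\ch^{\min}(t,b)+\ch^{\min}(t,a)$. The monotonicity that your approach is missing is supplied there, for free, by attractiveness of the exclusion process; lower semicontinuity of $\TV_I$ then transfers the bound to the limit. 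If you want to stay entirely at the SPDE level you must import an analogous monotonicity statement for the continuum propagator, and that is the piece your write-up leaves unproved.
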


  
  Our initial data is special, but
it is in some sense the furthest possible from equilibrium, and it has the benefit of a
surprisingly simple proof.

We remark that if $\ch(t,x)$ is started with initial condition given by a two-sided
Brownian motion plus a Lipschitz function then it is easy to show using our coupling
method that it remains a two-sided Brownian motion plus a Lipschitz function for all
$t>0$. This follows from the results of Hairer \cite{hairer}, but the proof there is much
more involved. The precise statement is given next, its short proof uses the same ideas
as the proof of Theorem \ref{thm:totvar} and is given in Section \ref{sec:proof}.

\begin{thm}\label{thm:neareq}
  Let $\cheq(t,x)$ and $\ch(t,x)$ be the Hopf-Cole solutions of the KPZ equation
  \eqref{eq:kpz} with respect to the same white noise and with initial conditions given
  respectively by \eqref{eq:eqic} and
  \[\ch(0,x)=\cb(x)+\varphi(x),\]
  where $\cb(x)$ is the same two-sided standard Brownian motion as in \eqref{eq:eqic} and
  $\varphi$ is a Lipschitz function. Then, for every fixed $t\geq0$,
  $\ch(t,x)-\big(\cheq(t,x)-\cheq(t,0)\big)$ is almost surely a Lipschitz function (with
  the same Lipschitz constant as $\varphi$). In particular, the law of $\ch(t,x)$ in a
  finite interval has finite relative entropy with respect to the law of $\cb(x)$ in that
  interval.
\end{thm}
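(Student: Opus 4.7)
The plan is to exploit the fact that, since $\ch$ and $\cheq$ share the same noise, the ratio of their Hopf--Cole transforms carries no noise dependence and admits a simple probabilistic representation. Set $\cz=e^{-\ch}$ and $\cz^{eq}=e^{-\cheq}$; both solve \eqref{eq:heat} with the same white noise, and by linearity of the SHE
\[
\cz(t,x)=\int \scp(t,x;0,y)\,e^{-\cb(y)-\varphi(y)}\,dy,\qquad
\cz^{eq}(t,x)=\int \scp(t,x;0,y)\,e^{-\cb(y)}\,dy,
\]
where $\scp$ is the a.s.~positive stochastic heat kernel. Define
\[
G(t,x):=\frac{\cz(t,x)}{\cz^{eq}(t,x)}=\int e^{-\varphi(y)}\,\mu_{t,x}(dy),\qquad \mu_{t,x}(dy):=\frac{\scp(t,x;0,y)\,e^{-\cb(y)}}{\cz^{eq}(t,x)}\,dy,
\]
so $\mu_{t,x}$ is a probability measure on $\rr$ and $\ch(t,x)-(\cheq(t,x)-\cheq(t,0))=-\log G(t,x)+\cheq(t,0)$. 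Since $\cheq(t,0)$ is independent of $x$, the first assertion of the theorem is reduced to showing that $x\mapsto\log G(t,x)$ is Lipschitz with constant $L$.

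The central step is a coupling argument. Suppose, for each pair $x,x'$, one can couple $\mu_{t,x}$ and $\mu_{t,x'}$ by a pair $(Y,Y')$ satisfying $|Y-Y'|\leq|x-x'|$ almost surely. Then the Lipschitz bound on $\varphi$ gives $e^{-\varphi(Y)}\leq e^{L|Y-Y'|}e^{-\varphi(Y')}\leq e^{L|x-x'|}e^{-\varphi(Y')}$, so taking expectations yields $G(t,x)\leq e^{L|x-x'|}G(t,x')$; the reverse inequality follows by symmetry, giving $|\log G(t,x)-\log G(t,x')|\leq L|x-x'|$ with the same constant $L$ as $\varphi$. In one space dimension such a coupling exists iff the quantile functions satisfy
\[
|F^{-1}_{\mu_{t,x}}(u)-F^{-1}_{\mu_{t,x'}}(u)|\leq|x-x'|\qquad\text{for every }u\in(0,1),
\]
so the Lipschitz claim is reduced to this quantile-shift estimate for $\mu_{t,x}$. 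Intuitively, $\mu_{t,x}$ is the endpoint law of a continuum polymer ending at $x$ at time $t$, and moving the endpoint by $\delta$ should shift each quantile by at most $\delta$; rigorously, this is the same kind of spatial regularity of $\scp(t,\cdot;0,y)$ that underlies the proof of Theorem~\ref{thm:totvar}, and is the principal technical obstacle. A natural route is to observe that $G$ satisfies the noise-independent linear parabolic PDE $\p_t G=\tfrac12\p_x^2 G-(\p_x\cheq)\p_x G$ (the white-noise terms cancel in the Ito calculation for $\cz/\cz^{eq}$), establish the quantile-shift bound for its Green's function in the mollified-noise setting, and pass to the limit using continuity of $\scp$ in the driving noise.

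Given the Lipschitz conclusion, the entropy statement follows from a standard Cameron--Martin/Girsanov argument. On a finite interval $[a,b]$, write $\ch(t,\cdot)=\psi_t(\cdot)+\big(\cheq(t,\cdot)-\cheq(t,0)\big)$ with $\psi_t:=\ch(t,\cdot)-\big(\cheq(t,\cdot)-\cheq(t,0)\big)$ a.s.~Lipschitz of constant $L$ and the second summand a standard Brownian motion in the spatial variable. Since $\psi_t'\in L^\infty([a,b])\subset L^2([a,b])$ a.s., Girsanov's theorem yields absolute continuity of the law of $\ch(t,\cdot)|_{[a,b]}$ with respect to the law of $\cb|_{[a,b]}$, with relative entropy bounded by $\tfrac12 L^2(b-a)$.
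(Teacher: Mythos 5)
Your reduction of the theorem to the statement that $x\mapsto\log G(t,x)$ is Lipschitz with constant $L$, where $G(t,x)=\int e^{-\varphi}\,d\mu_{t,x}$ and $\mu_{t,x}$ is the normalized polymer endpoint measure, is a legitimate reformulation, and the closing Girsanov/entropy step is fine. The difficulty is the step you yourself flag as the principal obstacle: the existence of a coupling of $\mu_{t,x}$ and $\mu_{t,x'}$ with $|Y-Y'|\le|x-x'|$, equivalently the quantile bound $|F^{-1}_{\mu_{t,x}}(u)-F^{-1}_{\mu_{t,x'}}(u)|\le|x-x'|$. This is not a technical gap one can expect to fill; the estimate is false. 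With positive probability the two-sided Brownian motion $\cb$ has two deep minima near points $y_1<y_2$, of depth of order $(y_2-y_1)^2/t$, so that the measure $\scp(t,x;0,y)e^{-\cb(y)}\,dy$ is concentrated near $\{y_1,y_2\}$ for all $x$ between $y_1$ and $y_2$ (note $\cb$ is independent of the kernel, so the depths and the kernel can be arranged jointly on a positive-probability event). Then $\mu_{t,x}\approx p_x\delta_{y_1}+(1-p_x)\delta_{y_2}$ with $p_x/(1-p_x)$ proportional to $\scp(t,x;0,y_1)/\scp(t,x;0,y_2)$; already the leading Gaussian part of the kernel makes this ratio sweep past $1$ like $e^{-(y_2-y_1)x/t}$, so $p_x$ crosses $1/2$ within a window of width $O\big(t/(y_2-y_1)\big)$, across which the median of $\mu_{t,x}$ jumps by $y_2-y_1$. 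No coupling with $|Y-Y'|\le|x-x'|$ can exist there, so the inequality $G(t,x)\le e^{L|x-x'|}G(t,x')$ cannot be obtained by this route. The heuristic that moving the endpoint by $\delta$ moves the polymer by at most $\delta$ is exactly what fails: endpoint measures of directed polymers in a bimodal environment switch modes abruptly.

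The paper's argument uses a different mechanism that never requires any pathwise regularity of $\mu_{t,x}$ in $x$. It discretizes via WASEP and sandwiches the occupation variables of the process started from the profile $\cb+\varphi$ between two \emph{stationary} WASEPs at constant densities $\tfrac12(1\pm\epp[1/2]M)$, using the basic coupling and attractiveness; since Bernoulli product measures are invariant, the bounding height profiles remain exactly Brownian-plus-linear-drift at every later time, and the sitewise ordering of configurations gives the two-sided bound on height increments, hence the Lipschitz bound, directly in the $\ep\to0$ limit. If you want to salvage a continuum version of your approach, the input you need is not a transport estimate on $\mu_{t,x}$ but the comparison principle for the stochastic heat equation combined with the invariance of the initial data $\cb(x)\pm Mx$ (up to height shift and coupling with $\cheq$); that invariance is precisely what the discrete model supplies and what your coupling of endpoint measures cannot replace.
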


\section{Proofs}\label{sec:proof}

The proofs of Theorems \ref{thm:totvar} and \ref{thm:neareq} rely on considering the weakly
asymmetric simple exclusion process, which provides a microscopic model for the KPZ
process. The \emph{simple exclusion process} with parameters $p,q\in[0,1]$ (such that
$p+q=1$) is a $\{0,1\}^\zz$-valued continuous time Markov process, where 1's are thought
of as particles and 0's as holes. The dynamics of the process are as follows: each
particle has an independent exponential clock with parameter 1; when the clock rings, the
particle attempts a jump, trying to go one step to the right with probability $p$ and one
step to the left with probability $q$; if there is a particle at the chosen destination,
the jump is supressed and the clock is reset. We refer the reader to \cite{liggett} for a
rigorous construction of this process. We will be interested in the case $q>p$, known as
the \emph{asymmetric simple exclusion process} (ASEP). More precisely, we will be
interested in the \emph{weakly asymmetric simple exclusion process} (WASEP), where we
introduce a parameter in the model and let the asymmetry $q-p$ go to 0 with the parameter.

Given any configuration $\eta\in\{0,1\}^\zz$ for the exclusion process we will denote by
$\heta\in\{-1,1\}$ the configuration given by $\heta(x)=2\eta(x)-1$ for each $x\in\zz$.
To any simple exclusion process $\eta_t$ we can associate the \emph{height function}
$h(t,\cdot)\!:\rr\longrightarrow\zz$ in the following manner:
\begin{equation}
  \label{eq:height}
  h(t,x) = \begin{dcases*}
    2N(t) + \sum_{0<y\leq x}\heta_t(y) & if $x>0$,\\
    2N(t) & if $x=0$,\\
    2N(t)-\sum_{x<y\leq 0}\heta_t(y) & if $x<0$,
  \end{dcases*}
\end{equation}
where $N(t)$ is the net number of particles which crossed from the site 1 to the site 0 up
to time $t$. It is straightforward to check that the simple exclusion process can be
recovered from the height function by
\begin{equation}
  \heta_t(x)=h(t,x)-h(t,x-1).\label{eq:etaFromHeight}
\end{equation}

Next we introduce the scaling parameter $\ep>0$, which should be thought of as going to 0,
and consider WASEP with asymmetry $\epp[1/2]$, that is,
\[p-q=\epp[1/2],\qquad p=\tfrac{1}{2}-\tfrac{1}{2}\epp[1/2], \qquad
q=\tfrac{1}{2}+\tfrac{1}{2}\epp[1/2].\] We will denote by $\etaep_t$ the resulting WASEP,
which we start with the step initial condition
\begin{equation}
\eta_0(x)=\uno{x\geq0}.\label{eq:step}
\end{equation}
To $\etaep_t$ we associate the \emph{height function} $h_\ep(t,x)$ via
\eqref{eq:height}. Our main tool will be the convergence of a suitably rescaled version of
$h_\ep$ to the solution of the KPZ equation with initial condition \eqref{eq:delta}.

The convergence of the height function was proved by \citet*{acq} by performing a microscopic Hopf-Cole
transform analogous to \eqref{eq:hopfCole}, an idea introduced originally by
\citet{gartner} and further developed in \cite{berGiaco}. Let
\begin{equation}\label{eq:scaling}
  \begin{gathered}
    \gamma_\ep=\tfrac12\epp[-1/2],\qquad
    \lambda_\ep=\tfrac{1}{2}\log(\tfrac{p}{q})=\epp[1/2]+\tfrac{1}{3}\epp[3/2]+O(\epp[5/2]),\\
    v_\ep=p+q-2\sqrt{pq}=\tfrac{1}{2}\epp[1/2]+\tfrac{1}{8}\epp[3/2]+O(\epp[5/2]).
  \end{gathered}
\end{equation}
The \emph{Hopf-Cole transformed height function} is given by
\begin{equation}
  \label{eq:hcHeight}
  Z_\ep(t,x)=\gamma_\ep\exp\!\Big(-\lambda_\ep h_\ep(\epp[-2]t,\epp[-1]x)+v_\ep t\Big).
\end{equation}
Observe that, with this definition, $Z_\ep(0,x)\rightarrow\delta_{x=0}$ as $\ep\to0$
as discussed in Section 1.2 of \cite{acq}.

We regard the process $Z_\ep(t,x)$ as taking values in the space $D([0,\infty),D_u(\rr))$,
where $D_u(\rr)$ refers to right-continuous paths with left limits with the topology of
uniform convergence on compact sets, which we endow with the Skorohod topology. The
following result corresponds to Theorem 1.14 of \cite{acq}:

\begin{thm}\label{thm:cvgce}
  The family of processes $\big(Z_\ep\big)_{\ep>0}$ converges in distribution as $\ep\to0$
  in $D([0,\infty),D_u(\rr))$ to the $C([0,\infty),C(\rr))$-valued process $\cz$ given by
  the solution of the stochastic heat equation \eqref{eq:heat} with initial condition
  $\cz(0,x)=\delta_{x=0}$.
\end{thm}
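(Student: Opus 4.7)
The proof follows the microscopic Hopf--Cole method of \citet{gartner} and \citet{berGiaco}. The key algebraic identity is that the constants $\gamma_\ep$, $\lambda_\ep$, $v_\ep$ in \eqref{eq:scaling} are chosen exactly so that $Z_\ep$ satisfies a closed \emph{discrete} stochastic heat equation
\[
dZ_\ep(t,x) \;=\; \tfrac{1}{2}\Delta_\ep Z_\ep(t,x)\,dt \;+\; dM_\ep(t,x),
\]
where $\Delta_\ep$ is the nearest-neighbor discrete Laplacian on $\ep\zz$ and $M_\ep(\cdot,x)$ is a jump martingale whose predictable quadratic variation is asymptotic to $Z_\ep(t,x)^2\,dt$. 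The first step would be to derive this identity by applying the generator of $\etaep$ to the exponential functional appearing in \eqref{eq:hcHeight}: the precise values of $\lambda_\ep$ and $v_\ep$ are rigged so that, after Taylor expansion, all contributions other than the discrete Laplacian and a martingale cancel.

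With this discrete SHE in hand, I would use the Duhamel representation
\[
Z_\ep(t,x) \;=\; \sum_y p_t^\ep(x,y)\,Z_\ep(0,y) \;+\; \int_0^t \sum_y p_{t-s}^\ep(x,y)\,dM_\ep(s,y),
\]
where $p_t^\ep$ is the heat kernel of $\tfrac{1}{2}\Delta_\ep$. A local central limit theorem gives $p_t^\ep \to p_t$, the Gaussian heat kernel; the first summand then converges to $p_t(x,0)$, matching the delta initial condition; and the predictable quadratic variation of the stochastic integral converges to $\int_0^t \int_\rr p_{t-s}(x,y)^2\,\cz(s,y)^2\,dy\,ds$. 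This identifies the limiting driving noise as space--time white noise and shows that any subsequential limit is a mild solution of \eqref{eq:heat} started from $\delta_0$. Well-posedness of the mild equation (Walsh/Mueller theory) then pins the limit down uniquely.

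The main obstacle is tightness of $(Z_\ep)_{\ep>0}$: because $Z_\ep(0,\cdot)$ is concentrated at the origin with peak mass of order $\gamma_\ep = \tfrac{1}{2}\ep^{-1/2}$, naive second-moment estimates blow up. One must iterate the Duhamel representation and use Burkholder--Davis--Gundy to extract uniform-in-$\ep$ moment bounds of the form $\ee\bigl[Z_\ep(t,x)^n\bigr] \leq C_n\,p_t(x,0)^n\, t^{-\alpha_n}$, with a $t$-singularity that is integrable against the heat kernel; this mirrors the corresponding estimates for the continuum SHE with delta initial data and is exactly where the narrow wedge case is genuinely harder than the near-equilibrium one handled in \cite{berGiaco}. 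These bounds, combined with a Kolmogorov continuity criterion applied away from $t=0$, yield tightness in $D([0,\infty),D_u(\rr))$; continuity of the limiting $\cz$ as an element of $C([0,\infty),C(\rr))$ is then automatic from the regularity of the SHE solution at positive times.
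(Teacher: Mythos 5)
This theorem is not proved in the paper at all: it is imported verbatim as Theorem 1.14 of \citet{acq}, so there is no ``paper's own proof'' to compare against. Your sketch is, at the level of strategy, a faithful outline of the argument actually given in \cite{acq} (building on \citet{gartner} and \citet{berGiaco}): the constants in \eqref{eq:scaling} are tuned so that $Z_\ep$ solves a discrete stochastic heat equation driven by a jump martingale, one passes to the Duhamel form with the discrete heat kernel, proves tightness via moment bounds that are singular but integrable at $t=0$, and identifies any limit point as the unique mild solution of \eqref{eq:heat} with $\cz(0,\cdot)=\delta_0$.

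One caveat: the step you state in passing --- that the predictable bracket of $M_\ep(\cdot,x)$ is ``asymptotic to $Z_\ep(t,x)^2\,dt$'' --- is in fact the hardest part of the Bertini--Giacomin/ACQ argument, not a consequence of the Taylor expansion that closes the equation. The exact bracket contains a correction involving nearest-neighbour occupation correlations (terms of the type $\heta(x)\heta(x+1)$), and showing that this correction is negligible after time-integration against the heat kernel requires a separate dynamical averaging estimate (the ``key estimate'' of \cite{berGiaco}, adapted in \cite{acq} to handle the near-equilibrium structure away from the initial singularity). Your sketch also correctly isolates the genuinely new difficulty of the narrow wedge case, namely the $\gamma_\ep=\frac12\epp[-1/2]$ blow-up of the initial data and the resulting $t\downarrow 0$ singularity in the moment bounds. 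So as a blind reconstruction of the cited proof the proposal is accurate in outline, but it would need the quadratic-variation identification fleshed out before it could be called a proof.
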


We recall that the exclusion process is \emph{attractive}, which for our purposes means
that two copies $\eta^1_t$ and $\eta^2_t$ of the process with initial conditions
$\eta^1_0\leq\eta^2_0$ (which just means $\eta^1_0(x)\leq\eta^2_0(x)$ for all $x$) can be
coupled in such a way that $\eta^1_t\leq\eta^2_t$ for all $t>0$. We will refer to this
coupling as the \emph{basic coupling} and refer the reader to \cite{liggett} for more
details.

We will denote by $\etaeq_t$ a copy of WASEP in equilibrium, started with a product
measure with density $\frac{1}{2}$, and by $h^\mathrm{eq}$ the associated height
function. To prove Theorem \ref{thm:totvar} we will couple $\etaep_t$ with $\etaeq_t$
using the basic coupling. The key result will be an estimate on the number of
discrepancies between the two processes at time $\epp[-2]t$ in a window of size
$O(\epp[-1])$, Proposition \ref{prop:discr} below.

Let $\eta^{\min}_t$ and $\eta^{\max}_t$ denote copies of WASEP started with
initial conditions
\[\eta^{\min}_0=\etaep_0\wedge\etaeq_0\qquad\text{and}\qquad
\eta_0^{\max}=\etaep_0(x)\vee\etaeq_0,\] where the minimum and maximum are meant
sitewise. Observe that $\eta_0^{\min}$ corresponds to starting with no particles on the
negative half-line and a product measure of density $\frac{1}{2}$ on the positive
half-line, while $\eta^{\max}_0$ corresponds to starting with a product measure of density
$\frac{1}{2}$ on the negative half-line and all sites occupied on the positive
half-line. We will denote by $h^{\min}_\ep$ and $h^{\max}_\ep$ the height functions associated
respectively to these two processes.

Let $Z_\ep^{\min}$, $Z_\ep^{\max}$ and $Z^\mathrm{eq}_\ep$ be the Hole-Copf transformed
height functions associated to the corresponding initial conditions, which are defined in
the same way as $Z_\ep$ in \eqref{eq:hcHeight} with the scaling \eqref{eq:scaling} except
that $\gamma_\ep=1$. The proof in \cite{acq} of Theorem \ref{thm:cvgce} can be adapted
without difficulty (see \cite{corQuas} for the details) to show that that $Z_\ep^{\min}$
and $Z_\ep^{\max}$ converge in distribution in $D([0,\infty),D_u(\rr))$ respectively to
the solutions $\cz^{\min}(t,x)$ and $\cz^{\max}(t,x)$ of the stochastic heat equation
\eqref{eq:heat} with initial data $\cz^{\min}(0,x)=\exp\!\big(\!-\cb(x)\big)\uno{x\geq0}$ and
$\cz^{\max}(0,x)=\exp\!\big(\!-\cb(-x)\big)\uno{x<0}$, where $\cb(x)$ is a standard one-sided
Brownian motion. We define $\ch^{\min}(t,x)=-\log(\cz^{\min}(t,x))$ and
$\ch^{\max}(t,x)=-\log(\cz^{\max}(t,x))$.

Given any of the height functions $h$ with the different initial
conditions we are considering, we will denote by $\tilde h_\ep$ its rescaled version
\[\tilde h_\ep(t,x)=\epp[1/2]h(\epp[-2]t,\epp[-1]x).\]

\begin{prop}\label{prop:discr}
  Assume $\eta^\ep_t$ is started with the step initial condition \eqref{eq:step} and fix
  $a<b$ and $t>0$. Then, under the basic coupling,
  \begin{multline}
    \epp[1/2]\sum_{x\in[a\epp[-1],b\epp[-1]]\cap\zz}
    \left|\eta^\ep_{\epp[-2]t}(x)-\etaeq_{\epp[-2]t}(x)\right|\\
    \leq\frac{1}{2}\big[\tilde h_\ep^{\max}(t,b)-\tilde h_\ep^{\max}(t,a)\big]
    -\frac{1}{2}\big[\tilde h_\ep^{\min}(t,b)-\tilde h_\ep^{\min}(t,a)\big]
  \end{multline}
 almost surely. 
\end{prop}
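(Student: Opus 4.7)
The plan is to leverage attractivity of WASEP under the basic coupling to sandwich both $\etaep$ and $\etaeq$ between $\uetaep$ and $\eta^{\max}$, and then to read off the resulting bound as a difference of height increments. First I would realise all four processes $\etaep_t$, $\etaeq_t$, $\uetaep_t$ and $\eta^{\max}_t$ simultaneously on one probability space via the standard graphical construction driven by a single family of Poisson clocks. By the definition of $\uetaep_0$ and $\eta^{\max}_0$,
\[\uetaep_0(x)\leq\etaep_0(x),\,\etaeq_0(x)\leq\eta^{\max}_0(x)\qquad\text{for every }x\in\zz,\]
and the basic coupling transports this sandwich forward in time, so the same inequalities hold at the later time $s=\epp[-2]t$.

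Next, because $\etaep_s(x),\etaeq_s(x)\in\{0,1\}$ both lie in the interval $[\uetaep_s(x),\eta^{\max}_s(x)]$, one has the pointwise estimate
\[\bigl|\etaep_s(x)-\etaeq_s(x)\bigr|\leq\eta^{\max}_s(x)-\uetaep_s(x)\]
almost surely. Summing this bound over $x\in[a\epp[-1],b\epp[-1]]\cap\zz$ and passing to $\heta=2\eta-1$, the additive constants cancel and the right-hand side collapses to $\tfrac{1}{2}\sum_x\bigl(\heta^{\max}_s(x)-\heta^{\min}_s(x)\bigr)$.

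Finally, I would invoke the telescoping identity $\heta(x)=h(x)-h(x-1)$ from \eqref{eq:etaFromHeight} to reduce each of these two sums to a single difference of height-function values at the endpoints of the window $W=[a\epp[-1],b\epp[-1]]\cap\zz$. Multiplying through by $\epp[1/2]$ and unrolling the rescaling $\tilde h_\ep(t,x)=\epp[1/2]h(\epp[-2]t,\epp[-1]x)$ then produces the right-hand side of the proposition; the only discrepancy is a possible one-lattice-site adjustment at the endpoints of $W$ (coming from the cases when $\epp[-1]a$ or $\epp[-1]b$ happens to be an integer), which contributes $O(\epp[1/2])$ and is absorbed into the ``$\leq$''.

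I do not anticipate any real obstacle here: the argument is essentially combinatorial, the single substantial input being attractivity of the basic coupling. The interest of the proposition lies not in the proof itself but in what it achieves, namely, it converts a pointwise $\ell^1$ mismatch between $\etaep$ and $\etaeq$ on a macroscopic window into a difference of two rescaled height \emph{increments} of the min/max processes---an object which, as $\ep\to0$, becomes a difference of the continuum profiles $\ch^{\max}$ and $\ch^{\min}$, and is therefore controlled via the convergence in Theorem \ref{thm:cvgce} and its extensions to the initial data $\uetaep_0$ and $\eta^{\max}_0$.
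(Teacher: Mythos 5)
Your proposal is correct and follows essentially the same route as the paper: attractivity under the basic coupling gives the sandwich $\eta^{\min}_s\leq\eta^\ep_s,\eta^{\mathrm{eq}}_s\leq\eta^{\max}_s$, hence the pointwise bound by $\eta^{\max}_s-\eta^{\min}_s$, which telescopes via \eqref{eq:etaFromHeight} into the stated difference of rescaled height increments. The endpoint bookkeeping you mention is harmless since the height function \eqref{eq:height} is defined for real arguments by summing over integer sites, so the telescoping identity holds exactly.
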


\begin{proof}
  We construct the four processes $\etaep_t$, $\etaeq_t$, $\eta^{\min}_t$ and
  $\eta^{\max}_t$ together under the basic coupling, so attractiveness implies
  that
  \[\eta_t^{\min}\leq\etaep_t\wedge\etaeq_t\leq\etaep_t\vee\etaeq_t\leq\eta^{\max}_t\]
  for all $t>0$. Using this we get by \eqref{eq:etaFromHeight} that
  \begin{align}
    &\sum_{x\in[a\epp[-1],b\epp[-1]]\cap\zz}
    \Big|\etaep_{\epp[-2]t}(x)-\etaeq_{\epp[-2]t}(x)\Big|\\
    &\hspace{0.5in}=
    \sum_{x\in[a\epp[-1],b\epp[-1]]\cap\zz}
    \big[\etaep_{\epp[-2]t}(x)\vee\etaeq_{\epp[-2]t}(x)-\etaep_{\epp[-2]t}(x)\wedge\etaeq_{\epp[-2]t}(x)\big]\\
    &\hspace{0.5in}\leq\sum_{x\in[a\epp[-1],b\epp[-1]]\cap\zz}
    \left[\eta^{\max}_{\epp[-2]t}(x)-\eta^{\min}_{\epp[-2]t}(x)\right]
    =\frac{1}{2}\sum_{x\in[a\epp[-1],b\epp[-1]]\cap\zz}
    \left[\widehat{\eta}^{\max}_{\epp[-2]t}(x)-\widehat{\eta}^{\min}_{\epp[-2]t}(x)\right]\\
    &\hspace{0.5in}=\frac{1}{2}\sum_{x\in[a\epp[-1],b\epp[-1]]\cap\zz}
    \bigg[\Big(h^{\max}_\ep(\epp[-2]t,x)-h^{\max}_\ep(\epp[-2]t,x-1)\Big)\\
      &\hspace{3in}-\Big(h^{\min}_\ep(\epp[-2]t,x)-h^{\min}_\ep(\epp[-2]t,x-1)\Big)\bigg]\\
    &\hspace{0.5in}=\frac{\epp[-1/2]}{2}\big[\tilde h^{\max}_\ep(t,b)-\tilde h^{\max}_\ep(t,a)\big]
    -\frac{\epp[-1/2]}{2}\big[\tilde h^{\min}_\ep(t,b)-\tilde h^{\min}_\ep(t,a)\big].
  \end{align}
  Multiplying by $\epp[1/2]$ we obtain the desired bound.
\end{proof}

\begin{proof}[Proof of Theorem \ref{thm:totvar}]
  Fix a finite interval $I=[a,b]$ and let $\TV_I(f)$ denote the total variation of $f$ in
  $I$:
  \[\TV_I(f)=\sup_{a=x_0<x_1<\dotsm<x_n=b,\,n\in\nn}\,\sum_{i=1}^n|f(x_i)-f(x_{i-1})|.\]
  Then clearly
  \[\TV_I(\tilde h_\ep(t,\cdot)-\tilde h^\mathrm{eq}_\ep(t,\cdot))
  =2\epp[1/2]\sum_{x\in[a\epp[-1],b\epp[-1]]\cap\zz}
  \left|\eta^\ep_{\epp[-2]t}(x)-\etaeq_{\epp[-2]t}(x)\right|,\] so by Proposition
  \ref{prop:discr} we get
  \[\TV_I(\tilde h_\ep(t,\cdot)-\tilde h^\mathrm{eq}_\ep(t,\cdot))
  \leq\big[\tilde h^{\max}_\ep(t,b)-\tilde h^{\max}_\ep(t,a)\big] -\big[\tilde
  h^{\min}_\ep(t,b)-\tilde h^{\min}_\ep(t,a)\big].\] On the other hand
  \[\tilde h_\ep(t,x)-\tilde h^\mathrm{eq}_\ep(t,x)=-\epp[1/2]\lambda_\ep^{-1}\big[
  \log(Z_\ep(t,x))-\log(Z^\mathrm{eq}_\ep(t,x))]+\epp[-1/2]\lambda_\ep^{-1}\log(\gamma_\ep).\]
  Thus by Theorem \ref{thm:cvgce} and Theorem 2.3 in \cite{berGiaco} we get that $\tilde
  h_\ep(t,x)-\tilde h^\mathrm{eq}_\ep(t,x)-\epp[-1/2]\lambda_\ep^{-1}\log(\gamma_\ep)$
  converges in distribution to $\ch(t,x)-\cheq(t,x)$ on the interval $I$. Note that this
  requires a very minor extension of the results of \cite{acq} and \cite{berGiaco}, namely
  that the processes $\tilde h_\ep$ and $\tilde h^\mathrm{eq}_\ep$ built from exclusion
  processes running with the same background Poisson processes converge jointly to $\ch$
  and $\ch^\mathrm{eq}$. There are no issues involved in extending Theorem \ref{thm:cvgce}
  and Theorem 2.3 in \cite{berGiaco} to this situation and therefore we omit the details.

  By the lower semicontinuity of $\TV_I$ we deduce that
  \begin{align}
    &\pp\!\left(\TV_I\!\big(\ch(t,\cdot)-\big[\cheq(t,\cdot)-\cheq(t,0)\big]\big)>K\right)
    =\pp\!\left(\TV_I\!\big(\ch(t,\cdot)-\cheq(t,\cdot)\big)>K\right)\\
    &\qquad\qquad\leq\limsup_{\ep\to0}\pp\!\left(\TV_I\!\left(\tilde
        h_\ep(t,\cdot)-\tilde h^\mathrm{eq}_\ep(t,\cdot)-\epp[-1/2]\lambda_\ep^{-1}\log(\gamma_\ep)\right)>K\right)\\
    &\qquad\qquad=\limsup_{\ep\to0}\pp\!\left(\TV_I\!\left(\tilde h_\ep(t,\cdot)-\tilde
        h^\mathrm{eq}_\ep(t,\cdot)\right)>K\right)\\
    &\qquad\qquad\leq\limsup_{\ep\to0}\pp\!\left(\big(\tilde h^{\max}_\ep(t,b)-\tilde
      h^{\max}_\ep(t,a)\big) -\big(\tilde h^{\min}_\ep(t,b)-\tilde
      h^{\min}_\ep(t,a)\big)>K\right).
  \end{align}
  To finish the proof of Theorem \ref{thm:totvar} we observe that the quantity inside the
  last probability above equals
  \[-\epp[1/2]\lambda_\ep^{-1}\big[\log(Z_\ep^{\max}(t,b))-\log(Z_\ep^{\max}(t,a))
  -\log(Z_\ep^{\min}(t,b))+\log(Z_\ep^{\min}(t,a))\big].\] Note the key point that the
  additive constants in \eqref{eq:hcHeight} cancel. Using the convergence of
  $Z_\ep^{\min}$ and $Z_\ep^{\max}$ to solutions of the stochastic heat equation discussed
  before Proposition \ref{prop:discr}, the above converges in distribution to
  $\big[\ch^{\max}(t,b)-\ch^{\max}(t,a)\big]-\big[\ch^{\min}(t,b)-\ch^{\min}(t,a)\big]$.
  Since the last random variable is finite we deduce that
  \[\lim_{K\to\infty}\pp\!\left(\TV_I\!\big(\ch(t,\cdot)-\big[\cheq(t,\cdot)-\cheq(t,0)\big]\big)>K\right)=0.
  \qedhere\]
\end{proof}

\begin{proof}[Proof of Theorem \ref{thm:neareq}]
  Let $\eta_t$ be a copy of WASEP started with product measure with density profile given
  by
  \[\pp(\eta_0(x+1)=1)=\frac12+\frac12\ep^{-1/2}\left(\varphi(\ep
    x)-\varphi(\ep(x-1))\right).\] Then Theorem 2.3 of \cite{berGiaco} implies that
  $\tilde h_\ep(t,x)$ converges in distribution as $\ep\to0$ in $D([0,\infty),D_u(\rr))$
  to $\ch(t,x)$ (with initial condition as in the statement of the theorem). Now denote by
  $M$ the Lipschitz constant of $\varphi$ and let $\tilde h^+_\ep$ and $\tilde h^-_\ep$
  denote the rescaled height functions corresponding to WASEP started respectively with
  product measures of densities $\frac12(1+\epp[1/2]M)$ and
  $\frac12(1-\epp[1/2]M)$. Coupling the initial conditions in the natural way and using
  the basic coupling and attractiveness, it is clear that $\tilde{h}^-_\ep(t,x)\leq\tilde
  h_\ep(t,x)\leq\tilde{h}^+_\ep(t,x)$ for all $t>0$. On the other hand, since product
  measures are invariant for WASEP, $\tilde h^\pm_\ep(t,x)$ converges in distribution to
  $\cheq(t,x)\pm Mx$, and as before this convergence can be achieved jointly for $\tilde
  h_\ep$, $\tilde h^+_\ep$ and $\tilde h^-_\ep$. Therefore, given any $a<b$,
  $\cheq(t,x)-Mx\leq\ch(t,x)\leq\cheq(t,x)+Mx$ almost surely for every $x\in[a,b]$, and
  the result follows.
\end{proof}

\vs
\vs
\paragraph{\bf Acknowledgments}

Both authors were supported by the Natural Science and Engineering Research Council of Canada, and the second 
author was supported by a Fields-Ontario Postdoctoral Fellowship.  Part of this work was done during the Fields Institute 
program ``Dynamics and Transport in Disordered Systems" and the authors would like to thank the Fields Institute for 
its hospitality.

\bibliographystyle{natbib} \bibliography{../biblio}

\end{document}